\numberwithin{equation}{section}
\newtheorem{theorem}{Theorem}
\newtheorem{prop}{Proposition}
\newtheorem{lemma}[subsection]{{\bf Lemma}}
\newtheorem{coro}[subsection]{{\bf Corollary}}
\newtheorem{remark}[subsection]{Remark}
\begin{document}

\title[Analytic continuation of $\ell$-generalized Fibonacci zeta function]{Analytic continuation of $\ell$-generalized Fibonacci zeta function} %\\ \today}

\author[D. K. Sahoo]{D. K. Sahoo}
\address{Dilip Kumar Sahoo, Indian Institute of Science Education and Research, Berhampur, Odisha - 760010\\ India.}
\email{dilipks18@iiserbpr.ac.in}

\author[N.K.Meher]{N.K. Meher}
\address{Nabin Kumar Meher, Department of Mathematics, Indian Institute of Information Technology Raichur, Govt. Engineering College, Yermarus Camp, Raichur, 584135, Karnataka, India.}
\email{mehernabin@gmail.com, nabinmeher@iiitr.ac.in}

\thanks{2010 Mathematics Subject Classification: Primary  11B39, Secondary 11M41, 30D30. \\
Keywords: Generalized Fibonacci sequence, Analytic continuation, Fibonacci zeta function, Residues}

\pagenumbering{arabic}
\pagestyle{headings}

\begin{abstract}
In this paper, for any positive integer $\ell\geq2,$ we define $\ell$-generalized Fibonacci zeta function. We then study its analytic continuation to the whole complex plane $\mathbb{C}.$ Further, we compute a possible list of singularities and residues of the function at these simple poles. Moreover, we deduce that the special values of $\ell$-generalized Fibonacci zeta function at negative integer arguments are rational.
\end{abstract}
\maketitle
\section{Introduction}
Let $\ell \geq 2$ be an integer. The $n^{th}$ generalized Fibonacci sequence $\left( F_n^{(\ell)}\right)_{n \geq 2-\ell} $ defined as 
\begin{equation*} 
	F_n^{(\ell)} = F_{n-1}^{(\ell)} + F_{n-2}^{(\ell)} + \cdots + F_{n-\ell}^{(\ell)}
\end{equation*}
with the initial conditions 
 \begin{align*}
 F_{-(\ell -2)}^{(\ell)} =F_{-(\ell -3)}^{(\ell)} = \cdots = F_0^{(\ell)} = 0   \quad \hbox{and} \quad F_1^{(\ell)} = 1. 
 \end{align*}
 
$F_n^{(\ell)} $ is also called as $n^{th}$ $\ell$-generalized Fibonacci number. 
 From \cite{Dda2020}, we obtain that 
  \begin{align*}
  F_n^{(\ell)} = 2^{n-2} \quad \hbox{for all}\quad  2\leq n \leq \ell + 1, \quad \hbox{and } \quad  F_n^{(\ell)} < 2^{n-2} \quad \hbox{for all} \quad  n \geq \ell+2.
  \end{align*}
The characteristics polynomial of the $\ell$-generalized Fibonacci number is given by 
\begin{align}\label{eq3}
\phi_{\ell}(x) = x^{\ell}-x^{\ell -1} - \cdots - x -1.
\end{align}
It is irreducible over $\mathbb{Q}[x]$ and has one root outside the unit circle. Let $\alpha= \alpha_1$ be that single root which lies in between $2 \left( 1-2^{-\ell}\right)$ and $2$ (see \cite{Wolfram1998}) and it is the dominant root of $\phi_{\ell}(x). $ The other roots of the polynomial \eqref{eq3} are $ \alpha_2, \ldots, \alpha_{\ell}.$ When $\ell$ is an even integer, then $\phi_{\ell}(x)$ has one negative real root which lies in the interval $(-1,0).$ In 2014, G. P. B. Dresden and Z. Du \cite{Dre2014} gave the ``Binet like formula for the terms $F_n^{(\ell)} $ "  which is given by 
\begin{align}\label{eq5}
F_n^{(\ell)} = \sum_{i=1}^{\ell} \frac{\alpha_{i} -1}{2+ (\ell+1) (\alpha_{i} -2)} \alpha_{i}^{n-1}.
\end{align}
From \cite{Dda2020}, it is also known that 
\begin{align*}
\left| F_n^{(\ell)} - \frac{\alpha -1}{2+ (\ell+1) (\alpha -2)} \alpha^{n-1} \right|< \frac{1}{2} \quad \hbox{ for all} \quad n \geq 2-\ell.
\end{align*}
In 2013, J. J. Bravo and F. Luca \cite{Bravo2013} obtained that 
\begin{align}\label{eq6}
\alpha^{n-2} \leq  F_n^{(\ell)} \leq \alpha^{n-1} \quad \hbox{holds for all} \quad n \geq 1 \quad  \hbox{and} \quad  \ell \geq 2.
\end{align}

When $\ell=2$, $F_n^{(\ell)}$ is same as Fibonacci number $F_n$ and when $\ell=3,$ it coincides with Tribonacci numbers $T_n$.

The Fibonacci zeta function is defined by the series 
 \begin{equation*}
 \zeta_{F}(s) = \sum_{n=1}^{\infty} \frac{1}{F_n^{s}}, \quad \mathrm{Re}(s) >0.
 \end{equation*}
 Analytic continuation of Fibonacci zeta function was studied by L. Navas \cite{Navas2001} in 2001. In 2013, K. Kamano \cite{Kamano2013} studied the analytic continuation of Lucas zeta function. Further, analytic continuation of the multiple Fibonacci and Lucas zeta functions were studied by the second author and S. S. Rout in ( \cite{Meher2018}, \cite{Meher2021}, \cite{Rout2018}). The arithmetic nature of special values of Fibonacci zeta function and the special value of Riemann zeta function $\zeta(s)$ behaves similar in nature. The irrationality of $ \zeta_{F}(1)$ was proved by R. Andr$\acute{e}$-Jeannin \cite{Andre1989} in 1989, while the the transcendence of $ \zeta_{F}(2m), \  \hbox{for} \  m \in \mathbb{N}$ was given by D. Duverney et.al. \cite{Durveny1997} in 1997. Furthermore, in 2007, C. Elsner et.al. \cite{Elsner2007} showed that $\zeta_{F}(2), \zeta_{F}(4), \zeta_{F}(6)$ are algebraically independent over $\mathbb{Q}$. Likewise, M. Ram Murty \cite{Murty2013} deduced that $ \zeta_{F}(2m), \ \hbox{for} \  m \in \mathbb{N}$  is transcendental by using the theory of modular form and a result of Yu. V. Nesterenko \cite{Nesterenko1996}.
 
 In this paper, we introduce the $\ell$-generalized Fibonacci zeta function which is defined as
 \begin{equation*}
 \zeta_{F^{(\ell)}}(s) = \sum_{n=1}^{\infty}  \frac{1}{ \left(F_n^{(\ell)}\right)^{s}}.
 \end{equation*}  
  When $\ell = 2,$ it is same as Fibonacci zeta function $\zeta_F(s).$ In this paper,  we study the analytic continuation of the $\ell$-generalized Fibonacci zeta function with a possible list of poles and their corresponding residues. Moreover, we discuss the arithmetic nature of $\ell$-generalized Fibonacci zeta function at negative integer arguments. The paper is organized as follows. In Section $2,$ we prove that $  \zeta_{F^{(\ell)}}$ is absolute convergent in $\mathrm{Re}(s) >0.$ In Section $3,$ we obtain the analytic continuation of $\ell$-generalized Fibonacci zeta function $  \zeta_{F^{(\ell)}}$, compute a possible list of poles and residues at these poles. In Section $4,$ we deduce that the special values of  $\ell$-generalized Fibonacci zeta function at negative integer arguments are rational.
  \section{Preliminaries}
 \begin{prop}\label{prop1}
 	The infinite series $\sum_{n >0} \left(F_n^{(\ell)}\right)^{-s}$ converges absolutely in the right half plane $\mathrm{Re} (s) >0.$
 \end{prop}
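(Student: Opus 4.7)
The plan is to use the lower bound $F_n^{(\ell)} \geq \alpha^{n-2}$ from \eqref{eq6} together with $\alpha > 1$ to dominate the tail of $\sum_n (F_n^{(\ell)})^{-s}$ by a convergent geometric series. Since each $F_n^{(\ell)}$ is a positive real number for $n \geq 1$, one has $|(F_n^{(\ell)})^{-s}| = (F_n^{(\ell)})^{-\sigma}$ where $\sigma = \mathrm{Re}(s)$, so absolute convergence reduces to a real comparison.

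First I would fix $s$ with $\sigma := \mathrm{Re}(s) > 0$. Using the inequality $F_n^{(\ell)} \geq \alpha^{n-2}$ valid for all $n \geq 1$ and $\ell \geq 2$, I get
\begin{equation*}
\bigl| (F_n^{(\ell)})^{-s} \bigr| \;=\; (F_n^{(\ell)})^{-\sigma} \;\leq\; \alpha^{-\sigma(n-2)} \;=\; \alpha^{2\sigma}\cdot\bigl(\alpha^{-\sigma}\bigr)^{n}.
\end{equation*}
Because $\alpha > 2(1-2^{-\ell}) \geq 1$ (strictly, since $\ell \geq 2$) and $\sigma > 0$, we have $\alpha^{-\sigma} \in (0,1)$, so $\sum_{n \geq 1} (\alpha^{-\sigma})^n$ converges as a geometric series. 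The comparison test then gives absolute convergence of $\sum_{n \geq 1} (F_n^{(\ell)})^{-s}$.

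There is essentially no obstacle here: the only small point to justify carefully is that $\alpha > 1$, which follows from the cited estimate $\alpha \in \bigl(2(1-2^{-\ell}),\,2\bigr)$ with $\ell \geq 2$ (giving $\alpha > 3/2$). I would also remark, for later use in the analytic continuation, that the same estimate shows the convergence is uniform on half-planes $\mathrm{Re}(s) \geq \sigma_0$ for any fixed $\sigma_0 > 0$, which is what will make $\zeta_{F^{(\ell)}}$ holomorphic on the open right half-plane.
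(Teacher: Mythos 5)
Your proof is correct and follows the same route as the paper: both use the bound $F_n^{(\ell)} \geq \alpha^{n-2}$ from \eqref{eq6} to reduce $\left|\left(F_n^{(\ell)}\right)^{-s}\right| = \left(F_n^{(\ell)}\right)^{-\sigma}$ to the convergent geometric series $\alpha^{2\sigma}\sum_{n\geq 1}\alpha^{-n\sigma}$ for $\sigma>0$. Your added remarks on $\alpha>1$ and on uniform convergence in half-planes $\mathrm{Re}(s)\geq\sigma_0$ are sound but not part of the paper's argument.
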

\begin{proof}
 From \eqref{eq6}, we get 
	 \begin{align}\label{eq7}
	\left| \left(F_n^{(\ell)}\right)^{-s} \right|  = \left(F_n^{(\ell)}\right)^{-\sigma} \leq \left(\alpha^{n-2}\right)^{-\sigma} = \alpha^{2 \sigma} \left(\alpha^{- n \sigma}\right).
	 \end{align}
		Since $ \sigma = \mathrm{Re} s > 0,$ from \eqref{eq7}, we obtain
	\begin{align*}
	\sum_{n=1}^{\infty}  \left| \left(F_n^{(\ell)}\right)^{-s} \right| \leq  \alpha^{2 \sigma} \sum_{n=1}^{\infty} \left(\alpha^{- n \sigma}\right) = \frac{\alpha^{2 \sigma}}{\alpha^{ \sigma}-1} < \infty.
	\end{align*}
	\end{proof}
The next lemma describes that the integer closest to first term of Binet-like formula is the $\ell$-generalized Fibonacci number.
\begin{lemma}[G. P. B. Dresden and Z. Du \cite{Dre2014}]\label{lem1}
	Let $F_n^{(\ell)}$ be the $n^{th}$ $\ell$-generalized Fibonacci number. Then
	\begin{align*}
	F_n^{(\ell)} = \mathrm{rnd} \left( \frac{\alpha -1}{2+ (\ell+1) (\alpha -2)} \alpha^{n-1} \right) \quad \hbox{for all} \quad n \geq 2- \ell,
	\end{align*}
	where  $\alpha$ is the unique positive dominant root and $\mathrm{rnd}(x)$ denotes the values of $x$ rounded to the nearest integer.
\end{lemma}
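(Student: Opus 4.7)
The plan is to obtain the lemma as an immediate consequence of the approximation bound already recorded in the excerpt from \cite{Dda2020}, namely
\begin{equation*}
\left| F_n^{(\ell)} - \frac{\alpha -1}{2+(\ell+1)(\alpha -2)}\,\alpha^{n-1}\right| < \frac{1}{2} \quad \text{for all } n \geq 2-\ell.
\end{equation*}
Set $c := \frac{\alpha-1}{2+(\ell+1)(\alpha-2)}$ and write $x_n := c\,\alpha^{n-1}$. The essential observation is that $F_n^{(\ell)}$ is an integer by construction of the recurrence, while $x_n$ is a real number lying at distance strictly less than $\tfrac{1}{2}$ from that integer. By the very definition of rounding to the nearest integer, $\mathrm{rnd}(x_n)$ is the unique integer $k$ with $|x_n - k| \le \tfrac{1}{2}$, and this integer must therefore coincide with $F_n^{(\ell)}$. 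The strictness of the inequality is important here, as it excludes the ambiguous half-integer case $x_n = k + \tfrac{1}{2}$ for which the rounding convention would need to be specified.

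If instead one wished to give a self-contained proof (rather than cite \cite{Dda2020}), I would start from the Binet-like formula \eqref{eq5}, split off the dominant term, and write
\begin{equation*}
F_n^{(\ell)} - c\,\alpha^{n-1} \;=\; \sum_{i=2}^{\ell} \frac{\alpha_i - 1}{2 + (\ell+1)(\alpha_i - 2)}\,\alpha_i^{n-1}.
\end{equation*}
Since $\phi_\ell$ has exactly one root outside the unit circle, every subdominant root satisfies $|\alpha_i| < 1$, so each summand decays geometrically in $n$. Uniform bounds on the coefficients $\left|\frac{\alpha_i-1}{2+(\ell+1)(\alpha_i-2)}\right|$ would then yield the estimate $<\tfrac{1}{2}$ for all sufficiently large $n$, and the remaining finite range $2-\ell \le n \le N_0$ could be verified directly using the initial conditions.

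The main obstacle in the self-contained route is establishing the strict bound $<\tfrac{1}{2}$ uniformly down to $n = 2-\ell$; the geometric decay is easy for large $n$, but the small-$n$ regime and the uniformity in $\ell$ require controlling $|2+(\ell+1)(\alpha_i-2)|$ away from zero for the subdominant roots, which is somewhat delicate. Fortunately, because the excerpt already records the desired inequality as a known fact, the proof of the lemma itself reduces to the one-line argument above, and I would present it in that compact form.
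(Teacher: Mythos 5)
The paper gives no proof of this lemma---it is quoted directly from Dresden and Du \cite{Dre2014}---and your one-line deduction from the recorded bound $\bigl|F_n^{(\ell)} - \frac{\alpha-1}{2+(\ell+1)(\alpha-2)}\alpha^{n-1}\bigr| < \frac{1}{2}$ is correct: since $F_n^{(\ell)}$ is an integer lying at distance strictly less than $\frac{1}{2}$ from $x_n$, it is necessarily the unique nearest integer to $x_n$, and the strictness disposes of the half-integer ambiguity as you observe. This is in substance exactly how the rounding formula is established in \cite{Dre2014} (the genuine work being the estimate on the subdominant terms that you sketch in your second paragraph), so presenting the lemma in the compact form is appropriate here.
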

\section{Analytic continuation of $\ell$-generalized Fibonacci zeta function}
 \begin{theorem}\label{thm1}
 	The $\ell$-generalized Fibonacci zeta function $ \zeta_{F^{(\ell)}} (s)$ can be meromorphically continued to  whole complex plane $ \mathbb{C}$ with possible simple poles at  \\
 	\begin{equation}\label{eq10.1}
	s= s_{k, k_2, \ldots, k_{\ell}, n}=-k+\frac{2ni\pi+k_2 \log\alpha_2+\cdots+k_{\ell}\log\alpha_{\ell}}{\log\alpha}\,,\,n\in\,\mathbb{Z},k,k_2,k_3,...,k_{\ell}\in\mathbb{N}_0 ,
	\end{equation}
		$  k=k_2+k_3+\cdots+k_{\ell} ,$
 	 where $\mathbb{N}_0$ denotes the set of non-negative integers and $\log(\alpha_i) \ (i=1, 2, \ldots, \ell)$ are the principal values whose imaginary part lies in the interval $ (-\pi, \pi ].$
 	 
 	  Moreover, the residue of $  \zeta_{F^{(\ell)}}(s)$ at $ s= s_{k, k_2, \ldots, k_{\ell}, n}$ is 
 	   \begin{align*}
 	  & \left( \frac{\alpha -1}{2+ (\ell+1) (\alpha -2)} \alpha^{-1} \right)^{-s_{k, k_2, \ldots, k_{\ell}, n}}  
 	  \binom{-s_{k, k_2, \ldots, k_{\ell}, n}}{k} \left( \frac{\alpha -1}{2+ (\ell+1) (\alpha -2)} \alpha^{-1} \right)^{-k}
 	  \\& \notag
 	  \frac{k!}{k_2!\cdots\,k_\ell!}\prod_{i=2}^{\ell}\left( \frac{\alpha_i -1}{2+ (\ell+1) (\alpha_i -2)} \alpha_i^{-1} \right)^{k_i} \frac{1}{\log \alpha} \,.
 	  \end{align*}
 	 \end{theorem}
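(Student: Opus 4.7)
The plan is to factor out the dominant contribution via Binet's formula \eqref{eq5} and expand the geometric error. Setting $c_i=(\alpha_i-1)/\bigl(2+(\ell+1)(\alpha_i-2)\bigr)$ and $A=c_1/\alpha$, one gets
\begin{equation*}
F_n^{(\ell)} \;=\; A\,\alpha^n\,(1+\epsilon_n),\qquad \epsilon_n \;=\; \sum_{i=2}^{\ell}\frac{c_i}{c_1}\Bigl(\frac{\alpha_i}{\alpha}\Bigr)^{n-1}.
\end{equation*}
Because $|\alpha_i|<1$ for $i\ge 2$ while $\alpha>1$, $\epsilon_n$ decays at rate $\rho^n$ for some $\rho<1$; choose $N$ so that $|\epsilon_n|<1/2$ for all $n\ge N$. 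Split $\zeta_{F^{(\ell)}}(s)=\sum_{n=1}^{N-1}(F_n^{(\ell)})^{-s}+\sum_{n\ge N}(F_n^{(\ell)})^{-s}$. The finite initial sum is entire in $s$ (each term is $e^{-s\log F_n^{(\ell)}}$), so the pole structure lies entirely in the tail.

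On the tail, expand $(1+\epsilon_n)^{-s}$ by the binomial series $\sum_{k\ge 0}\binom{-s}{k}\epsilon_n^k$ and then expand $\epsilon_n^k$ by the multinomial theorem. Interchange the order of summation between $n$ and $(k,k_2,\ldots,k_\ell)$ --- justified by absolute convergence for $\mathrm{Re}(s)$ in the half-plane of Proposition \ref{prop1}, since $|\binom{-s}{k}|$ grows only polynomially in $k$ on compact sets of $s$ while $|\epsilon_n|<1/2$ --- to convert the inner $n$-sum into a geometric series with common ratio $\alpha^{-s-k}\prod_{i=2}^{\ell}\alpha_i^{k_i}$. After rearranging $\prod(c_i/c_1)^{k_i}\prod(\alpha_i/\alpha)^{-k_i}=c_1^{-k}\alpha^k\prod(c_i/\alpha_i)^{k_i}$, one obtains
\begin{equation*}
\sum_{n\ge N}\!(F_n^{(\ell)})^{-s} = A^{-s}\!\sum_{k\ge 0}\binom{-s}{k}c_1^{-k}\alpha^k\!\!\sum_{k_2+\cdots+k_\ell=k}\!\frac{k!}{k_2!\cdots k_\ell!}\prod_{i=2}^{\ell}\!\Bigl(\frac{c_i}{\alpha_i}\Bigr)^{k_i}\!\frac{\bigl(\alpha^{-s-k}\prod\alpha_i^{k_i}\bigr)^{N}}{1-\alpha^{-s-k}\prod\alpha_i^{k_i}}.
\end{equation*}
Each term is meromorphic on $\mathbb{C}$, and the double series converges locally uniformly off the pole set (the numerator $(\alpha^{-s-k}\prod\alpha_i^{k_i})^N$ provides decay $\rho^{Nk}$ with $\rho=\max_{i\ge 2}|\alpha_i|/\alpha<1$), producing the meromorphic continuation to all of $\mathbb{C}$.

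The factor $1-\alpha^{-s-k}\prod\alpha_i^{k_i}$ has a simple zero exactly when $(-s-k)\log\alpha+\sum_{i=2}^{\ell}k_i\log\alpha_i\in 2\pi i\,\mathbb{Z}$; solving for $s$ produces the list \eqref{eq10.1}. For the residue at such a point $s_*$, set $f(s)=(-s-k)\log\alpha+\sum k_i\log\alpha_i$ so that $f'(s)=-\log\alpha$; then the residue of $1/(1-e^{f(s)})$ at $s_*$ equals $1/\log\alpha$, while the numerator $\bigl(\alpha^{-s-k}\prod\alpha_i^{k_i}\bigr)^N$ equals $1$ at $s_*$. Substituting $A^{-s_*}=(c_1/\alpha)^{-s_*}$ and $c_1^{-k}\alpha^k=(c_1/\alpha)^{-k}$ and identifying $c_i/\alpha_i$ with $\frac{\alpha_i-1}{2+(\ell+1)(\alpha_i-2)}\alpha_i^{-1}$, the remaining factors collapse precisely into the residue expression stated in the theorem.

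The main technical obstacle is the rigorous justification of the interchange of summation and the local uniform convergence of the resulting double series on compact subsets of $\mathbb{C}$ minus the candidate pole set; one must balance the geometric decay of $\epsilon_n$ against the polynomial-in-$k$ growth of $\binom{-s}{k}$ uniformly in $s$. A secondary subtlety --- signalled by the word \emph{possible} in the statement --- is that different index tuples $(k,k_2,\ldots,k_\ell,n)$ may collapse to the same $s_*$ whenever $\log\alpha_2/\log\alpha,\ldots,\log\alpha_\ell/\log\alpha$ satisfy nontrivial $\mathbb{Z}$-linear relations modulo $2\pi i/\log\alpha$, in which case the true residue is the sum of contributions from all such tuples and could in principle vanish.
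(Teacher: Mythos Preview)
Your proof is correct and follows essentially the same route as the paper: factor out the dominant Binet term, expand $(1+\epsilon_n)^{-s}$ via the binomial and multinomial series, sum the resulting geometric series in $n$, and read off poles and residues from the denominators $1-\alpha^{-s-k}\prod_{i}\alpha_i^{k_i}$. The only difference is cosmetic --- you split off the first $N-1$ terms to guarantee $|\epsilon_n|<1/2$ (and add careful remarks on uniform convergence and on pole collisions), whereas the paper invokes Lemma~\ref{lem1} to assert $|\epsilon_n|<1$ for all $n\ge 1$ and works with the full sum from $n=1$ directly.
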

\begin{proof}
	From Lemma \ref{lem1}, we have 
	\begin{align*}
     	0 < & \left| F_n^{(\ell)} - \left( \frac{\alpha -1}{2+ (\ell+1) (\alpha -2)} \alpha^{n-1} \right) \right| < 1  \\ \notag
     	\implies  &    \left| 
	\frac{F_n^{(\ell)}}{\left( \frac{\alpha -1}{2+ (\ell+1) (\alpha -2)} \alpha^{n-1} \right)} -1 \right| < 1 \\ \notag
	 \implies  &     \left| 
	 \frac{\sum_{i=2}^{\ell} \left( \frac{\alpha_i -1}{2+ (\ell+1) (\alpha_i -2)} \alpha_i^{n-1} \right) }{\left( \frac{\alpha -1}{2+ (\ell+1) (\alpha -2)} \alpha^{n-1} \right)} \right| < 1.
	\end{align*}
	Now we consider the following.
	\begin{align*}
	\left( F_n^{(\ell)}\right)^{-s}   = &\left( \left( \frac{\alpha -1}{2+ (\ell+1) (\alpha -2)} \alpha^{n-1} \right) + \sum_{i=2}^{\ell} \left( \frac{\alpha_i -1}{2+ (\ell+1) (\alpha_i -2)} \alpha_i^{n-1} \right)  \right)^{-s}\\ \notag
	= & \left( \frac{\alpha -1}{2+ (\ell+1) (\alpha -2)} \alpha^{n-1} \right)^{-s} \left( 1+ \frac{\sum_{i=2}^{\ell} \left( \frac{\alpha_i -1}{2+ (\ell+1) (\alpha_i -2)} \alpha_i^{n-1} \right) }{\left( \frac{\alpha -1}{2+ (\ell+1) (\alpha -2)} \alpha^{n-1} \right)} \right)^{-s} \\ \notag
	= & \left( \frac{\alpha -1}{2+ (\ell+1) (\alpha -2)} \alpha^{n-1} \right)^{-s} \sum_{k=0}^{\infty} \binom{-s}{k}  \left(  \frac{\sum_{i=2}^{\ell} \left( \frac{\alpha_i -1}{2+ (\ell+1) (\alpha_i -2)} \alpha_i^{n-1} \right) }{\left( \frac{\alpha -1}{2+ (\ell+1) (\alpha -2)} \alpha^{n-1} \right)} \right)^{k}.
	\end{align*}
   	Using Proposition \ref{prop1}, observe that
	\begin{align*}
	  & \sum_{n=1}^{\infty}  \left| \left( \frac{\alpha -1}{2+ (\ell+1) (\alpha -2)} \alpha^{n-1} \right)^{-s} \sum_{k=0}^{\infty} \binom{-s}{k}  \left(  \frac{\sum_{i=2}^{\ell} \left( \frac{\alpha_i -1}{2+ (\ell+1) (\alpha_i -2)} \alpha_i^{n-1} \right) }{\left( \frac{\alpha -1}{2+ (\ell+1) (\alpha -2)} \alpha^{n-1} \right)} \right)^{k}  \right| \\ \notag
	    = &  \sum_{n=1}^{\infty}  \left|	\left( F_n^{(\ell)}\right)^{-s} \right|  < \infty.\\ \notag
		\end{align*}
		On interchanging the order of summation and using multinomial expansion formula, we get 
		\begin{align*}
	&\sum_{n=1}^{\infty} \left( F_n^{(\ell)}\right)^{-s} \\ \notag
	=& \sum_{n=1}^{\infty} \left( \frac{\alpha -1}{2+ (\ell+1) (\alpha -2)} \alpha^{n-1} \right)^{-s} \sum_{k=0}^{\infty} \binom{-s}{k}  \left(  \frac{\sum_{i=2}^{\ell} \left( \frac{\alpha_i -1}{2+ (\ell+1) (\alpha_i -2)} \alpha_i^{n-1} \right) }{\left( \frac{\alpha -1}{2+ (\ell+1) (\alpha -2)} \alpha^{n-1} \right)} \right)^{k} \\ \notag
	=& \sum_{k=0}^{\infty} \binom{-s}{k} \sum_{n=1}^{\infty} \left( \frac{\alpha -1}{2+ (\ell+1) (\alpha -2)} \alpha^{n-1} \right)^{-s}  \left(  \frac{\sum_{i=2}^{\ell} \left( \frac{\alpha_i -1}{2+ (\ell+1) (\alpha_i -2)} \alpha_i^{n-1} \right) }{\left( \frac{\alpha -1}{2+ (\ell+1) (\alpha -2)} \alpha^{n-1} \right)} \right)^{k}\\ \notag
	=&\left( \frac{\alpha -1}{2+ (\ell+1) (\alpha -2)} \alpha^{-1} \right)^{-s}\sum_{k=0}^{\infty} \binom{-s}{k}\bigg\{\left( \frac{\alpha -1}{2+ (\ell+1) (\alpha -2)} \alpha^{-1} \right)^{-k}\\&
	\sum_{k_2+k_3+\cdots+k_{\ell}=k}\frac{k!}{k_2!\cdots\,k_{\ell}!}\prod_{i=2}^{{\ell}}\left( \frac{\alpha_i -1}{2+ (\ell+1) (\alpha_i -2)} \alpha_i^{-1} \right)^{k_i}\sum_{n=1}^{\infty}\left(\frac{\alpha_2^{k_2}\cdots\,\alpha_{\ell}^{k_{\ell}}}{\alpha^{s+k}}\right)^n\bigg\}.\\ \notag
	 \end{align*}
	 Note that $ \left|  \frac{\alpha_2^{k_2} \cdots \alpha_{\ell}^{k_{\ell}} }{\alpha^{s+k}}\right| \leq   \frac{\alpha^{k_2 + \cdots + k_{\ell}}} {\alpha^{\sigma+k}} = \frac{1}{\alpha^{\sigma}} < 1$ for $\sigma >0.$
	 
	 Therefore, the above series becomes 
	 \begin{align}\label{eq14}
	 \zeta_{F^{(\ell)}}(s) =&\left( \frac{\alpha -1}{2+ (\ell+1) (\alpha -2)} \alpha^{-1} \right)^{-s}\sum_{k=0}^{\infty} \binom{-s}{k}\bigg\{\left( \frac{\alpha -1}{2+ (\ell+1) (\alpha -2)} \alpha^{-1} \right)^{-k}\\& \notag
	 \sum_{k_2+k_3+\cdots+k_\ell=k}\frac{k!}{k_2!\cdots\,k_\ell!}\prod_{i=2}^{\ell}\left( \frac{\alpha_i -1}{2+ (\ell+1) (\alpha_i -2)} \alpha_i^{-1} \right)^{k_i}\frac{1}{\alpha^{s+k}\alpha_2^{-k_2}\cdots\,\alpha_\ell^{-k_\ell}-1}\bigg\}\,.  
	 \end{align}
From \eqref{eq5}, it is clear that for any integer $i$ with  $1 \leq i\leq \ell, $ $ \left(2 + (\ell +1) (\alpha_i -2)\right) \neq 0.$ Note that the function  $  h_{k, k_2, \ldots, k_{\ell}}(s)= \frac{1}{\alpha^{s+k}\alpha_2^{-k_2}\cdots\,\alpha_\ell^{-k_\ell}-1} $ has poles at
\\
 $ s=-k+\frac{2ni\pi+k_2 \log\alpha_2+\cdots+k_\ell \log\alpha_\ell}{\log\alpha}\,,\,n\in\,\mathbb{Z},$ $ k,k_2,k_3,...,k_\ell\in\mathbb{N}_0 $ with  $ k=k_2+k_3+\cdots+k_\ell $.
 
 Consider the function $g_{k, k_2, \ldots, k_{\ell}}(s) = \alpha^{s+k}\alpha_2^{-k_2}\cdots\,\alpha_{\ell}^{-k_\ell}-1.$ Then
  $$g_{k, k_2, \ldots, k_{\ell}}'(s) = \alpha^{s+k}\alpha_2^{-k_2}\cdots\,\alpha_{\ell}^{-k_\ell} \log \alpha .$$
 Thus, $g_{k, k_2, \ldots, k_{\ell}}'(s)$ at $s=s_{k, k_2, \ldots, k_{\ell}, n}= -k+\frac{2ni\pi+k_2 \log\alpha_2+\cdots+k_\ell \log\alpha_\ell}{\log\alpha} $ is $\log \alpha$ which is non-zero. Therefore, the poles of $h_{k, k_2, \ldots, k_{\ell}}(s)$ are simple.
 
  The series \eqref{eq14} determines the holomorphic function on $\mathbb{C}$ except for the poles derived from the function  $h_{k, k_2, \ldots, k_{\ell}}(s).$ Hence, the function $\zeta_{F^{(\ell)}} (s)$ can be meromorphically continued to the whole $s$-plane and it has possible simple poles st $s = s_{k, k_2, \ldots, k_{\ell}, n} = -k+\frac{2ni\pi+k_2 \log\alpha_2+\cdots+k_\ell \log\alpha_\ell}{\log\alpha}.$
  The residue of $  h_{k, k_2, \ldots, k_{\ell}}(s)= \frac{1}{\alpha^{s+k}\alpha_2^{-k_2}\cdots\,\alpha_\ell^{-k_\ell}-1} $ at $s = s_{k, k_2, \ldots, k_{\ell}, n} = -k+\frac{2ni\pi+k_2 \log\alpha_2+\cdots+k_\ell \log\alpha_\ell}{\log\alpha}$ is 
 \begin{align*}
 \mathrm{Res}_{ s = s_{k, k_2, \ldots, k_{\ell}, n}} h_{k, k_2, \ldots, k_{\ell}}(s)=  \frac{1}{\frac{d}{ds} \left(\alpha^{s+k}\alpha_2^{-k_2}\cdots\,\alpha_\ell^{-k_\ell}-1\right) }\Bigg|_{s = s_{k, k_2, \ldots, k_{\ell}, n}} = \frac{1}{\log \alpha }.
 \end{align*}
 Now the residue of $\zeta_{F^{(\ell)}}(s)$ at $ s = s_{k, k_2, \ldots, k_{\ell}, n}$ is 
 \begin{align*}
& \mathrm{Res}_{ s = s_{k, k_2, \ldots, k_{\ell}, n}} \zeta_{F^{(\ell)}}(s) \\
=& \left( \frac{\alpha -1}{2+ (\ell+1) (\alpha -2)} \alpha^{-1} \right)^{-s_{k, k_2, \ldots, k_{\ell}, n}}  
 \binom{-s_{k, k_2, \ldots, k_{\ell}, n}}{k} \left( \frac{\alpha -1}{2+ (\ell+1) (\alpha -2)} \alpha^{-1} \right)^{-k}
\\& \notag
 \frac{k!}{k_2!\cdots\,k_\ell!}\prod_{i=2}^{\ell}\left( \frac{\alpha_i -1}{2+ (\ell+1) (\alpha_i -2)} \alpha_i^{-1} \right)^{k_i} \frac{1}{\log \alpha} \,.
 \end{align*}
\end{proof}
\begin{remark}
When $\ell =2,$ we have $\alpha \alpha_2 = -1 \implies \alpha_2 = \frac{-1}{\alpha}.$ Since $\alpha $ is a positive real number, $\arg \alpha_2 = \pi.$ From \eqref{eq10.1}, we get
 \begin{align*}
  s = s_{k, k_2, n} = &-k+\frac{2ni\pi+k\log\alpha_2}{\log\alpha} 
   =  -k+\frac{2ni\pi+k\log |\alpha_2| + i k \arg \alpha_2 }{\log\alpha} \\
    = &  -k+\frac{2ni\pi - k\log |\alpha| + i k \arg \alpha_2 }{\log\alpha} = -2k+ i\frac{2n \pi + k \arg \alpha_2 }{\log\alpha} \\
    =& -2k+ \frac{ i \pi (2n + k )}{\log\alpha}.
  \end{align*} These are the same list of poles as obtained by L. Navas \cite{Navas2001}.
\end{remark}
Next, we discuss the special cases of singularities  for our better understanding.
\begin{coro}\label{coro2}
 Let $n \in \mathbb{Z},\,k\in\mathbb{N}_0$ such that $ (\ell-1)| k $. Then the possible poles of $\zeta_{F^{(\ell)} }(s)$ are given by 
 \begin{equation} \label{20.0}
 	s_{k, \frac{k}{\ell-1},\dots,\frac{k}{\ell-1}, n}= 
 	\begin{cases}
 		-(k + \frac{k}{\ell-1}) +i\frac{\pi(2n+ \frac{k}{\ell-1})}{\log\alpha} \quad  \,\hbox{if}\,\,\ell\,\,\hbox{is even}\,,\\
 		-(k + \frac{k}{\ell-1}) +i\frac{2n\pi}{\log\alpha} \quad \quad  \quad  \hbox{if}\,\,\ell\,\,  \hbox{is odd}\,.
 	\end{cases}
 \end{equation}
\end{coro}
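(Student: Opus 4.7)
The plan is to specialize the pole formula from Theorem~\ref{thm1} by choosing $k_2=k_3=\cdots=k_\ell=\frac{k}{\ell-1}$, which is a valid assignment of non-negative integers precisely when $(\ell-1)\mid k$ and automatically satisfies the constraint $k_2+k_3+\cdots+k_\ell=k$. With this choice, \eqref{eq10.1} reads
\[
s_{k,\frac{k}{\ell-1},\ldots,\frac{k}{\ell-1},n}=-k+\frac{2ni\pi+\frac{k}{\ell-1}\bigl(\log\alpha_2+\cdots+\log\alpha_\ell\bigr)}{\log\alpha},
\]
so the entire task reduces to evaluating the quantity $S:=\log\alpha_2+\cdots+\log\alpha_\ell$ in terms of $\log\alpha$.

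The evaluation of $S$ rests on two observations. First, since $\phi_\ell(x)$ has leading coefficient $1$ and constant term $-1$, Vieta's formula gives $\alpha\cdot\alpha_2\cdots\alpha_\ell=(-1)^{\ell+1}$, hence $|\alpha_2\cdots\alpha_\ell|=1/\alpha$. Second, because $\phi_\ell$ has real coefficients, every non-real root comes paired with its complex conjugate; for such a pair $re^{i\theta},re^{-i\theta}$ with $\theta\in(0,\pi)$, the principal branch gives $\log(re^{i\theta})+\log(re^{-i\theta})=2\log r$, a real contribution. I would then split on the parity of $\ell$: when $\ell$ is odd, all of $\alpha_2,\ldots,\alpha_\ell$ occur in conjugate pairs, so $S$ is real and equals $\log|\alpha_2\cdots\alpha_\ell|=-\log\alpha$; when $\ell$ is even, exactly one of the $\alpha_i$ is the negative real root in $(-1,0)$ (as recorded after \eqref{eq3}), and its principal logarithm contributes an extra $i\pi$, so $S=-\log\alpha+i\pi$.

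Plugging these evaluations back into the displayed expression for $s_{k,\frac{k}{\ell-1},\ldots,\frac{k}{\ell-1},n}$, the $-\log\alpha/\log\alpha$ term combines with the leading $-k$ to produce the shift $-(k+\tfrac{k}{\ell-1})$, while the $i\pi$ contribution (present only in the even case) merges with $\frac{2ni\pi}{\log\alpha}$ to yield $\frac{i\pi(2n+\frac{k}{\ell-1})}{\log\alpha}$, reproducing both branches of \eqref{20.0}. The only subtle point is branch-cut bookkeeping---checking that the sum of principal logarithms is really the principal logarithm of the product, with no stray $2\pi i$ correction---and this is exactly what the conjugate-pair/negative-real decomposition above settles. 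The remaining manipulations are purely algebraic simplifications.
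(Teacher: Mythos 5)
Your proposal is correct and follows essentially the same route as the paper: both specialize $k_2=\cdots=k_\ell=\frac{k}{\ell-1}$ in \eqref{eq10.1}, use Vieta's formula $\alpha\alpha_2\cdots\alpha_\ell=(-1)^{\ell+1}$ to handle the moduli, and split on the parity of $\ell$ so that conjugate pairs cancel the imaginary parts while the negative real root (present only for even $\ell$) contributes the extra $i\pi$. Your explicit remark on the branch-cut bookkeeping is a welcome clarification of a point the paper passes over more quickly.
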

\begin{proof}
 Note that $ \alpha \alpha_2 \cdots \alpha_{\ell} = (-1)^{\ell + 1}.$ 
 Therefore, we have
\begin{equation}\label{20.1}
  |\alpha \alpha_2 \cdots \alpha_{\ell}| = 1 \implies  \log \alpha + \log | \alpha_2| + \cdots + \log| \alpha_{\ell}|= 0.
\end{equation}
Observe that $\alpha >1$ and for  $1 \leq j \leq \ell$,  we can write $\log \alpha_j = \log |\alpha_j| + i \arg \alpha_j$,  where $i = \sqrt{(-1)}$ and $ \arg \alpha_j  \in (-\pi, \pi].$
Since $ \log \alpha_j$'s are principal values of complex logarithms, if $\ell $ is odd, 
\begin{equation}\label{20.2}
\arg \alpha_2 + \cdots + \arg \alpha_{\ell} = 0. 
\end{equation}
If $\ell $ is even,
\begin{equation}\label{20.3}
\arg \alpha_2 + \cdots + \arg \alpha_{\ell} = \pi. 
\end{equation}
Now using \eqref{20.1}, \eqref{20.2} and \eqref{20.3} in \eqref{eq10.1} with $   k_2 =  k_3 = \cdots = k_{\ell} = \frac{k}{\ell-1} \in \mathbb{N}_0 $, we get \eqref{20.0}.

	\end{proof}
\begin{remark}
	From Corollary \ref{coro2}, we have seen that some of the possible  poles lie on the lines $ \mathrm{Re}(s) = - (k + \frac{k}{\ell-1})$ spaced at intervals of $\frac{2\pi i }{\log \alpha}.$ In particular, when $\ell $ is an odd integer, $ s =  - (k + \frac{k}{\ell-1}) , k \in \mathbb{N}_0$ are possible simple poles and when $\ell $ is an even integer, $ s =  - (k + \frac{k}{\ell-1}) + i \frac{k \pi }{(\ell -1)\log \alpha}, k \in \mathbb{N}_0$ are possible simple poles. Furthermore, when $\ell $ is an even integer and $k = -2n(\ell-1),$ then from \eqref{20.0}, we obtain $s= 2n\ell, \  n \in \mathbb{Z}_{\leq 0} $ are possible simple negative integer poles.
\end{remark}
\section{Special values at negative integer arguments}
Furthermore, we will discuss the values of $\zeta_{F^{(\ell)}}(s)$ at negative integers.
\begin{theorem}
	Let $m$ be positive integer such that $-m$ is  not a pole of $\zeta_{F^{(\ell)} } (s)$. Then $ \zeta_{F^{(\ell)} } (-m) \in \mathbb{Q}$.
\end{theorem}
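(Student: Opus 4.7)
The plan is to evaluate the meromorphic continuation \eqref{eq14} at $s=-m$ and then invoke Galois invariance over $\mathbb{Q}$.

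First, I would substitute $s=-m$ directly into \eqref{eq14}. The binomial coefficient $\binom{-s}{k}$ becomes $\binom{m}{k}$, which vanishes for $k>m$, so the outer sum collapses to a finite sum over $0\le k\le m$. By the characterization of poles in Theorem~\ref{thm1}, the hypothesis that $-m$ is not a pole is exactly the statement that none of the denominators $\alpha^{-m+k}\alpha_{2}^{-k_{2}}\cdots\alpha_{\ell}^{-k_{\ell}}-1$ appearing in this finite double sum vanishes. Thus $\zeta_{F^{(\ell)}}(-m)$ is a well-defined element of the splitting field $K=\mathbb{Q}(\alpha_{1},\ldots,\alpha_{\ell})$ of $\phi_{\ell}$.

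Next, I would reindex by setting $k_{1}:=m-k$, so that the outer index set becomes the set of all non-negative integer tuples $(k_{1},k_{2},\dots,k_{\ell})$ with $k_{1}+k_{2}+\cdots+k_{\ell}=m$. Writing $c_{i}:=\frac{\alpha_{i}-1}{2+(\ell+1)(\alpha_{i}-2)}$ for brevity, using $\binom{m}{m-k_{1}}\cdot\frac{(m-k_{1})!}{k_{2}!\cdots k_{\ell}!}=\frac{m!}{k_{1}!k_{2}!\cdots k_{\ell}!}$ to merge the binomial and multinomial coefficients, and clearing the factor $\prod_{i}\alpha_{i}^{-k_{i}}$ from the geometric-series fraction by multiplying numerator and denominator by $\prod_{i}\alpha_{i}^{k_{i}}$, one arrives at the manifestly symmetric closed form
\begin{equation*}
\zeta_{F^{(\ell)}}(-m)=\sum_{k_{1}+k_{2}+\cdots+k_{\ell}=m}\frac{m!}{k_{1}!k_{2}!\cdots k_{\ell}!}\,\prod_{i=1}^{\ell}c_{i}^{k_{i}}\cdot\frac{1}{1-\alpha_{1}^{k_{1}}\alpha_{2}^{k_{2}}\cdots\alpha_{\ell}^{k_{\ell}}},
\end{equation*}
in which the ``distinguished'' dominant root $\alpha=\alpha_{1}$ now plays the same role as its conjugates.

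Finally, I would conclude by Galois invariance. Since $\phi_{\ell}(x)$ is irreducible over $\mathbb{Q}$, every $\sigma\in\mathrm{Gal}(K/\mathbb{Q})$ induces a permutation $\pi$ of $\{1,\ldots,\ell\}$ via $\sigma(\alpha_{i})=\alpha_{\pi(i)}$, and because $c_{i}$ is obtained from $\alpha_{i}$ by a fixed $\mathbb{Q}$-rational formula one has $\sigma(c_{i})=c_{\pi(i)}$. Applying $\sigma$ to the summand indexed by $(k_{1},\ldots,k_{\ell})$ therefore yields the summand indexed by the permuted tuple $(k_{\pi^{-1}(1)},\ldots,k_{\pi^{-1}(\ell)})$, which still belongs to the summation range. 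Hence $\sigma$ merely permutes the finitely many terms; as this holds for every $\sigma$, the value $\zeta_{F^{(\ell)}}(-m)$ is fixed by $\mathrm{Gal}(K/\mathbb{Q})$ and therefore lies in $\mathbb{Q}$.

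The main obstacle I anticipate is the bookkeeping in the reindexing step: one must track carefully how the Binet normalising factors $\alpha_{i}^{-k_{i}}$ cancel against the corresponding factors in the denominator of $h_{k,k_{2},\ldots,k_{\ell}}$, so that the distinguished root $\alpha_{1}$ is exchanged with the subsidiary roots on equal footing. Once the symmetric closed form is secured, the Galois-invariance conclusion is essentially automatic and requires no input beyond the irreducibility of $\phi_{\ell}$.
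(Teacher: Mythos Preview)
Your proposal is correct and shares the paper's overall strategy: substitute $s=-m$ into \eqref{eq14} so that $\binom{-s}{k}=\binom{m}{k}$ truncates the sum to $0\le k\le m$, observe that the result lies in the splitting field $K=\mathbb{Q}(\alpha_1,\dots,\alpha_\ell)$, and then show it is fixed by every $\sigma\in\mathrm{Gal}(K/\mathbb{Q})$.

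The execution of the Galois step differs. The paper keeps $\alpha=\alpha_1$ in its distinguished role and, for a fixed automorphism $\sigma$ with $\sigma(\alpha)=\alpha_r$, $\sigma(\alpha_j)=\alpha$, compares for each $0\le b\le m$ the coefficient of $\left(\frac{\alpha-1}{2+(\ell+1)(\alpha-2)}\alpha^{-1}\right)^{b}$ in $\zeta_{F^{(\ell)}}(-m)$ and in $\sigma(\zeta_{F^{(\ell)}}(-m))$; a reindexing over $b\le k\le m$ shows these coefficients agree. Your route instead sets $k_1:=m-k$, merges $\binom{m}{k}\cdot\frac{k!}{k_2!\cdots k_\ell!}=\frac{m!}{k_1!\cdots k_\ell!}$, and absorbs the stray factor $\prod_i\alpha_i^{-k_i}$ into the geometric-series denominator to obtain a closed form that is manifestly symmetric in all the roots; Galois invariance is then immediate because $\sigma$ simply permutes the multinomial summands. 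Your symmetrisation is cleaner and sidesteps the coefficient-matching bookkeeping, while the paper's argument has the minor virtue of staying literally in the form \eqref{eq14}. Both arguments tacitly use that the hypothesis ``$-m$ is not a pole'' guarantees none of the finitely many denominators $\alpha_1^{-k_1}\cdots\alpha_\ell^{-k_\ell}-1$ vanishes; this is how the paper reads the hypothesis as well.
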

\begin{proof}
	Let $-m\,,\,m\in\mathbb{N} $ is not a pole of $ \zeta_{F^{(\ell)} } (s) $.
	From \eqref{eq14}, we have 
	\begin{equation}\label{eq119}
	\begin{split}
	\zeta_{F^{(\ell)}}&(-m)=\sum_{k=0}^{m}{m\choose\,k}\left(\frac{\alpha-1}{2+(\ell+1)(\alpha-2)}\alpha^{-1}\right)^{m-k}\\&\left(\sum_{k_2+\cdots+k_{\ell}=k}\frac{k!}{k_2!\cdots\,k_{\ell}!}\prod_{i=2}^{\ell}\left(\frac{\alpha_i-1}{2+(\ell+1)(\alpha_i-2)}\alpha_i^{-1}\right)^{k_i}\frac{1}{\alpha^{-(m-k)}\alpha_2^{-k_2}\cdots\alpha_{\ell}^{-k_{\ell}}-1}\right).
	\end{split}
	\end{equation}
	Note that since $0 \leq k\leq m,$ we have $0 \leq m-k \leq m.$  Let $b $ be a fixed integer with $0 \leq b \leq m.$ If we choose $ m-k = b,$ then $ k = m-b$ and $ -m+k = -b.$ Then the coefficient of $\left(\frac{\alpha-1}{2+(\ell+1)(\alpha-2)}\alpha^{-1}\right)^{b} $ in expression \eqref{eq119} is 
	\begin{align*}
	{m\choose\,m-b}\sum_{k_2+\cdots+k_{\ell}=m-b}\frac{(m-b)!}{k_2!\cdots\,k_{\ell}!}\prod_{i=2}^{\ell}\left(\frac{\alpha_i-1}{2+(\ell+1)(\alpha_i-2)}\alpha_i^{-1}\right)^{k_i}\frac{1}{\alpha^{-b}\alpha_2^{-k_2}\cdots\alpha_{\ell}^{-k_{\ell}}-1}.\end{align*}
	
	Let $ \sigma\,:\,\mathbb{Q}(\alpha,\alpha_2,\cdots,\alpha_{\ell})\,\to \mathbb{Q}(\alpha,\alpha_2,\cdots,\alpha_{\ell}) $ be a non-trivial field automorphism.
	We know that any field automorphism permutes the roots of an irreducible polynomial. Without loss generality, let us assume that $\sigma(\alpha) = \alpha_r$ and $\sigma(\alpha_j) = \alpha$ for some $ 1 \leq j, r \leq \ell.$
	\begin{align}\label{eq2.2}
	\sigma&(\zeta_{F^{(\ell)}}(-m))=\sum_{k=0}^{m}{m\choose\,k}\left(\frac{\alpha_r-1}{2+(\ell+1)(\alpha_r-2)}\alpha_r^{-1}\right)^{m-k}\\ \notag
	&\left(\sum_{k_2+\cdots+k_{\ell}=k}\frac{k!}{k_2!\cdots\,k_{\ell}!}\prod_{i=2}^{\ell}\left(\frac{\sigma(\alpha_i)-1}{2+(\ell+1)(\sigma(\alpha_i)-2)}\sigma(\alpha_i)^{-1}\right)^{k_i}\frac{1}{\alpha_r^{-(m-k)}\sigma(\alpha_2)^{-k_2}\cdots\sigma(\alpha_{\ell})^{-k_{\ell}}-1}\right) \\ \notag
	& =\sum_{k=0}^{m}{m\choose\,k}\left(\frac{\alpha_r-1}{2+(\ell+1)(\alpha_r-2)}\alpha_r^{-1}\right)^{m-k}\\ \notag
	&\Bigg(\sum_{k_2+\cdots + k_j+ \cdots+k_{\ell}=k}\frac{k!}{k_2!\cdots\,k_{\ell}!}\prod_{i=2 , i \neq j }^{\ell}\left(\frac{\sigma(\alpha_i)-1}{2+(\ell+1)(\sigma(\alpha_i)-2)}\sigma(\alpha_i)^{-1}\right)^{k_i} \left(\frac{\alpha-1}{2+(\ell+1)(\alpha-2)}\alpha^{-1}\right)^{k_j} \\ \notag
	& \frac{1}{\alpha_r^{-(m-k)}\sigma(\alpha_2)^{-k_2}\cdots\sigma(\alpha_{\ell})^{-k_{\ell}}-1}\Bigg).
	\end{align}
	In \eqref{eq2.2}, the coefficient of $\left(\frac{\alpha-1}{2+(\ell+1)(\alpha-2)}\alpha^{-1}\right)^{b} $ is
	
	\begin{align*}
	&=\sum_{k=b}^{m}{m\choose\,k}\left(\frac{\alpha_r-1}{2+(\ell+1)(\alpha_r-2)}\alpha_r^{-1}\right)^{m-k}\\
	&\left(\sum_{k_2+\cdots+k_{j-1}+k_{j+1}\cdots+k_{\ell}=k-b}\frac{k!}{\left(\prod_{\substack{i=2\\i\neq\,j}}^{\ell}k_i!\right)b!}\prod_{\substack{i=2\\i\neq\,j}}^{\ell}\left(\frac{\alpha_i-1}{2+(\ell+1)(\alpha_i-2)}\alpha_i^{-1}\right)^{k_i}\frac{1}{\alpha^{-b}\alpha_r^{-(m-k)}\prod_{\substack{i=2\\i\neq\,j}}^{\ell}\alpha_i^{-k_i}-1}\right)\\
	&={m\choose\,m-b}\sum_{k=b}^{m}\left(\frac{\alpha_r-1}{2+(\ell+1)(\alpha_r-2)}\alpha_r^{-1}\right)^{m-k}\frac{1}{(m-k)!}\\
	&\left(\sum_{k_2+\cdots+k_{j-1}+k_{j+1}\cdots+k_{\ell}=k-b}\frac{(m-b)!}{\left(\prod_{\substack{i=2\\i\neq\,j}}^{\ell}k_i!\right)}\prod_{\substack{i=2\\i\neq\,j}}^{\ell}\left(\frac{\alpha_i-1}{2+(\ell+1)(\alpha_i-2)}\alpha_i^{-1}\right)^{k_i}\frac{1}{\alpha^{-b}\alpha_r^{-(m-k)}\prod_{\substack{i=2\\i\neq\,j}}^{\ell}\alpha_i^{-k_i}-1}\right).
	\end{align*}
	Note that $ (m-k) +(k-b) = m-b. $  Since $k$ varies from $b$ to $m,$  we have $ 0 \leq m-k \leq m-b,$ and $0 \leq k_i \leq m-b$ Thus the above sum will be 
	\begin{align*}
	{m\choose\,m-b}\sum_{k_2+\cdots+k_{\ell}=m-b}\frac{(m-b)!}{k_2!\cdots\,k_{\ell}!}\prod_{i=2}^{\ell}\left(\frac{\alpha_i-1}{2+(\ell+1)(\alpha_i-2)}\alpha_i^{-1}\right)^{k_i}\frac{1}{\alpha^{-b}\alpha_2^{-k_2}\cdots\alpha_{\ell}^{-k_{\ell}}-1}.	\end{align*}
	Note that for any integer $b$ with $0 \leq b \leq m,$ and for any field automorphism $\sigma$ we have proved that the coefficient of  $\left(\frac{\alpha-1}{2+(\ell+1)(\alpha-2)}\alpha^{-1}\right)^{b} $ in $\zeta_{F^{(\ell)}}(-m)$ is same as $\sigma(\zeta_{F^{(\ell)}}(-m))$. Therefore,
	for any field automorphism $\sigma, $ we have  $\sigma(\zeta_{F^{(\ell)}}(-m)) = \zeta_{F^{(\ell)}}(-m)$. Hence $ \zeta_{F^{(\ell)}}(-m) \in \mathbb{Q}.$
\end{proof}
\section{Concluding Remark}
Analogous to Fibonacci zeta function, it is interesting to find the zeros of $\ell$-generalized Fibonacci zeta function. The study of arithmetic nature of special values of $\ell$-generalized Fibonacci zeta function at positive even integer arguments is another important object for future discussion. 

\noindent
{\bf Data Availability Statement:}  The data used to support the findings of this study are included within the article.

\noindent
{\bf Acknowledgement:}
The authors would like to thank Dr. G. K. Viswanadham for going through this paper. First author is supported by UGC research fellowship. 
%The authors are grateful to the anonymous referee for  going through the paper meticulously and giving his/her useful and helpful suggestions  which improves the presentation of the paper.
 \setlength{\parskip}{0.2 cm}

\end{document}